\newtheorem{defi}{Definition}[section]
\newtheorem{theorem}[defi]{Theorem}
\newtheorem{lemma}[defi]{Lemma}
\newtheorem{prop}[defi]{Proposition}
\theoremstyle{definition}
\newtheorem{example}[defi]{Example}
\newtheorem{remark}[defi]{Remark}
\begin{document}
\keywords{Switching, cospectral, isospectral, adjacency spectrum}

\title{Constructing cospectral signed graphs}

\author[F. Belardo]{Francesco Belardo}
\address{Francesco Belardo, Universit\`{a} degli Studi di Napoli Federico II, Via Cintia, 80126 Napoli, Italia}
\email{fbelardo@unina.it}

\author[M. Brunetti]{Maurizio Brunetti}
\address{Maurizio Brunetti, Universit\`{a} degli Studi di Napoli Federico II, Via Cintia, 80126 Napoli, Italia}
\email{mbrunett@unina.it}

\author[M. Cavaleri]{Matteo Cavaleri}
\address{Matteo Cavaleri, Universit\`{a} degli Studi Niccol\`{o} Cusano - Via Don Carlo Gnocchi, 3 00166 Roma, Italia}
\email{matteo.cavaleri@unicusano.it}

\author[A. Donno]{Alfredo Donno}
\address{Alfredo Donno, Universit\`{a} degli Studi Niccol\`{o} Cusano - Via Don Carlo Gnocchi, 3 00166 Roma, Italia}
\email{alfredo.donno@unicusano.it}

\begin{abstract}
A well--known fact in Spectral Graph Theory is the existence of pairs of isospectral nonisomorphic graphs (known as PINGS). The work of A.J. Schwenk (in 1973) and of C. Godsil and B. McKay (in 1982) shed some light on the explanation of the presence of isospectral graphs, and they gave routines to construct PINGS. Here, we consider the Godsil-McKay--type routines developed for graphs, whose adjacency matrices are $(0,1)$-matrices, to the level of signed graphs, whose adjacency matrices allow the presence of $-1$'s. We show that, with suitable adaption, such routines can be successfully ported to signed graphs, and we can built pairs of cospectral switching nonisomorphic signed graphs.
\end{abstract}

\maketitle

\begin{center}
{\footnotesize{\bf Mathematics Subject Classification (2010)}: 05C22, 05C50}
\end{center}

\section{Introduction}

In this manuscript we consider signed graphs, that are (simple and finite) graphs whose egdes get a sign $\pm 1$. More formally, a signed graph $\Gamma=(G,\sigma)$ is a graph $G=(V_G,E_G)$, with vertex set $V_G$ and edge set $E_G$, together with a function $\sigma:E_G\rightarrow\{+1,-1\}$ assigning a positive or negative sign to each edge. The graph $G$ is the underlying graph of $\Gamma$, while $\sigma$ is the signature of $\Gamma$. In this way, the adjacency matrix $A(\Gamma)=A_\Gamma$ of $\Gamma$ is defined similarly to (unsigned) graphs, that is, by putting $+1$ or $-1$ in correspondence to positive or negative edges, respectively. A walk is positive or negative if the product of corresponding weights is positive or negative, respectively. Since cycles are special kinds of walks, this definition applies to them as well, and we get positive and negative cycles.

Those notions related to unsigned graphs directly extend to signed graphs. For example, the degree $d(v)$ of a vertex $v$ in $\Gamma$ is simply its degree in $G$. A vertex of degree one is said to be a pendant vertex. However, some other definitions comes from signed graph theory and they depend on the signature, as, for example, the positive $d^+(v)$ (resp.,  negative $d^-(v)$) degree of vertex $v$ is the number of positive (negative) edges incident to $v$, or the already mentioned sign of a cycle. A signed graph is {balanced} if all its cycles are positive, otherwise it is {unbalanced}. Unsigned graphs can be included in this theory as signed graphs where all edges get a positive sign, that is, the {all-positive signature}; clearly, unsigned graphs are balanced graphs (and the vice versa is also true).

An unavoidable feature of signed graphs is the concept of {signature switching}. Given a signed graph $\Gamma=(G,\sigma)$ and a subset $U\subseteq V_G$, the signed graph $\Gamma^U$, obtained from $\Gamma$ by reversing the edge signs in the cut $[U,V_G\setminus U]$, is said to be (switching) equivalent to $\Gamma$, and $\sigma_{\Gamma^U}$ to $\sigma_\Gamma$. We write $\Gamma^U \sim \Gamma$ or $\sigma_{\Gamma^U} \sim \sigma_{\Gamma}$. Notably, the signature switching does not affect the sign of cycles, hence $\Gamma$ and $\Gamma^U$ share the same positive (and negative) cycles. Evidently, the signature is determined up to equivalence by the set of positive cycles (see \cite{zas2}). Signatures equivalent to the all-positive one lead to balanced signed graphs (that are equivalent to unsigned graphs).

As (unsigned) graphs under vertex permutations are (naturally) considered as the same graph, we can combine switching equivalence and vertex permutation to the more general concept of switching isomorphism of signed graphs. Here, switching isomorphic signed graphs are considered as the same signed graph. For basic results in the theory of signed graphs, the reader is referred to Zaslavsky \cite{zas2} (see also the dynamic survey \cite{dynamic1}). \smallskip

We next consider the (adjacency) spectral theory of signed graphs. Recall, the adjacency matrix $A_{\Gamma}=(a_{ij})$ is the symmetric $(0,+1,-1)$-matrix such that $a_{ij}=\sigma(ij)$ whenever the vertices $i$ and $j$ are adjacent, and $a_{ij}=0$ otherwise. For a signed graph $\Gamma=(G,\sigma)$ and its adjacency matrix $A_\Gamma$, the $A$-polynomial is $p_{\Gamma}(x)=\det(xI-A_\Gamma)$. The spectrum of $A_\Gamma$ is called the $A$-spectrum of the signed graph $\Gamma$. We shall omit the suffices or indices if clear from the context, and similar notation will be used for unsigned graphs.

Switching has a matrix counterpart. In fact, let $\Gamma$ and $\Gamma^U$ be two switching equivalent graphs. Consider the matrix $S_U=\textrm{diag}(s_1,s_2,\ldots,s_n)$ such that
\[s_i=\left\{
\begin{array}{ll}
+1,& i\in U;\\
-1, & i\in V_G\setminus U.
\end{array}
\right.\]
The matrix $S_U$ is the {\it switching matrix}. It is easy to check that
\[A_{\Gamma^U}=S_U\, A_{\Gamma}\, S_U.\]
Hence, signed graphs from the same switching class share similar graph matrices by means of signature matrices (signature similarity). If we also allow permutation of vertices, we have signed permutation matrices, and we can speak of (switching) isomorphic signed graphs. Switching isomorphic signed graphs are cospectral, and their matrices are signed-permutationally similar. For basic results on the spectra of signed graphs (resp. unsigned graphs), we refer the readers to \cite{BeCiKoWa,zas1} (resp. \cite{BrHae,CvDS,CvRoSi}).

It is well--known that in general graphs are not determined by their set of eigenvalues with respect to some prescribed graph matrix. In fact, since the early papers on Spectral Graph Theory, it was evident that there are infinite families of nonisomorphic graphs sharing the same spectrum. Such occurrences prompted two lines of research: 1) find ways to built (eventually nonisomorphic) cospectral graphs, and 2) prove whether some class of graphs are or are not determined by their eigenvalues (by possibly detecting all exceptions). It seems to us that the second research line has much more literature, especially after the seminal paper \cite{HaeSpe}. However, here we are more interested in the first line of research, possibly started by the nice paper of Schwenk \cite{Schwenk} and its unofficial sequel by Godsil and McKay \cite{godsilmckay}. In fact, the latter paper presented
routines for (possibly) construct pairs of non-isomorphic cospectral graphs (know as PINGS in \cite{CvDS}), and such method is nowadays known as ``Godsil-McKay'' switching. A curiosity is that, the first concept of switching is due to Seidel: the Seidel switching lead to isospectral graphs with respect to the Seidel matrix (cf. \cite{BrHae}, for example). As the Seidel matrix of a (simple and unsigned) graph can be seen as the adjacency matrix of a signed complete graph, then Zaslavsky elaborated the concept of signature switching. On the other hand, the Seidel switching also influenced the Godsil-McKay switching. Hence, all these (apparently) different concepts of switching share the same root.

The aim of this paper is to extend to the spectral theory of signed graphs the Godsil-McKay switching in order to built cospectral signed graphs.\\ We describe the remainder of this paper. In Section \ref{sectionGMunsigned} we recall the basic idea of the Godsil-McKay switching, which will be revised in Section \ref{sect3} to work with the adjacency matrix of signed graphs. In Section \ref{sect4} we consider the ``generalized'' Godsil-McKay switching, recently developed for unsigned graphs in \cite{wangqiuhu}, and in Section \ref{sect5} we adapt it to signed graphs as well.

\section{The GM-switching for unsigned graphs}\label{sectionGMunsigned}
In this section we restrict to unsigned graphs. For $G=(V_G,E_G)$, simple and connected graph, we denote by $A_G$ the corresponding adjacency matrix, and by $p_G(x)$ the characteristic polynomial of $G$. Recall, two graphs are said to be cospectral if they share the same characteristic polynomial.\\ In \cite{godsilmckay}, Godsil and McKay introduced a graph construction, which is nowadays called the  Godsil-McKay switching, or GM-switching for short, allowing to produce pairs of cospectral graphs. To keep the paper self-contained, we will briefly explain the construction used in the GM-switching (for some recent developments see \cite{AbBuHa}).

Let $\pi=\{C_1, \ldots, C_t, D\}$ be a partition of $V_G$, with $|C_i|=n_i$ and $|D|=d$. Suppose that, for each $1\leq i,j \leq t$ and $v\in D$:
\begin{enumerate}
\item any two vertices in $C_i$ have the same number of neighbors in $C_j$;
\item $v$ has either $0$, $n_i/2$ or $n_i$ neighbors in $C_i$.
\end{enumerate}
Then the graph $G^{\pi}$ constructed from $G$ by local switching with respect to the partition $\pi$ is obtained as follows: for every $v\in D$ and $1\leq i\leq t$ such that $v$ has $n_i/2$ neighbors in $C_i$, one deletes these $n_i/2$ edges and join $v$ instead to the remaining $n_i/2$ vertices of $C_i$. It turns out that the graphs $G$ and $G^\pi$ are cospectral.\\
\indent Before giving a sketch of the proof from \cite{godsilmckay}, we need to fix some notations. We will denote by $O_{m,n}$ (resp. $J_{m,n}$) the $m\times n$ matrix whose entries are all equal to $0$ (resp. to $1$), and by $I_n$ the identity matrix of order $n$. We also put $O_n=O_{n,n}$ and $J_n = J_{n,n}$. We will often write ${\bf 0}_n$ and ${\bf 1}_n$ instead of $O_{n,1}$ and $J_{n,1}$, respectively. Finally, given $n$ square matrices $A_1, \ldots, A_n$, we denote by ${\rm diag}(A_1, \ldots, A_n)$ the matrix block-diagonal matrix, whose diagonal blocks are whose $i$-th diagonal block is $A_i$, and all the remaining blocks are zero-matrices.

\begin{theorem}\cite{godsilmckay}\label{godsilmckaythm}
$G$ and $G^\pi$ are cospectral.
\end{theorem}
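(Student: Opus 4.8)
The plan is to exhibit a single orthogonal matrix $Q$, depending only on the partition $\pi$, that conjugates $A_G$ into $A_{G^\pi}$; since similar matrices share their characteristic polynomial, this gives cospectrality at once. First I would attach to each cell $C_i$ the $n_i\times n_i$ matrix
\[
Q_i=\frac{2}{n_i}J_{n_i}-I_{n_i},
\]
and record the two elementary facts that $Q_i$ is symmetric and that $Q_i^2=I_{n_i}$ (using $J_{n_i}^2=n_iJ_{n_i}$); thus each $Q_i$ is an orthogonal involution. I then set $Q=\mathrm{diag}(Q_1,\ldots,Q_t,I_d)$, which is again symmetric with $Q^2=I$, so $Q^{-1}=Q^{T}=Q$. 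The whole claim reduces to checking the matrix identity $Q\,A_G\,Q=A_{G^\pi}$.

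Next I would write $A_G$ in block form along $\pi$, denoting by $A_{ij}$ the block between $C_i$ and $C_j$, by $B_i$ the block between $C_i$ and $D$, and by $A_{DD}$ the block inside $D$, and compute $Q\,A_G\,Q$ block by block. The block between $C_i$ and $C_j$ becomes $Q_iA_{ij}Q_j$, the block between $C_i$ and $D$ becomes $Q_iB_i$, and the $D$-block stays $A_{DD}$ (matching the fact that local switching never alters edges inside $D$). For the cells $C_i,C_j$ I would invoke hypothesis (1): applied to the ordered pair $(i,j)$ it gives $A_{ij}$ constant row sum $r$, hence $A_{ij}J_{n_j}=rJ_{n_i,n_j}$, while applied to $(j,i)$, i.e. to $A_{ij}^{T}$, it gives constant column sum $c$, hence $J_{n_i}A_{ij}=cJ_{n_i,n_j}$. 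Counting the $C_i$--$C_j$ edges in two ways forces $r n_i=c n_j$.

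Expanding the product then yields
\[
Q_iA_{ij}Q_j=\frac{2r}{n_j}J_{n_i,n_j}-\frac{2c}{n_i}J_{n_i,n_j}+A_{ij},
\]
and the relation $r/n_j=c/n_i$ makes the two $J$-terms cancel, so $Q_iA_{ij}Q_j=A_{ij}$; all adjacencies among the cells are preserved, exactly as expected since local switching only touches $C_i$--$D$ edges (the diagonal case $i=j$ is the same computation, $A_{ii}$ being symmetric with $r=c$). The decisive step is the $C_i$--$D$ block, handled by hypothesis (2). For $v\in D$ the corresponding column $b$ of $B_i$ is a $0/1$ vector with $s\in\{0,\,n_i/2,\,n_i\}$ ones, and a direct computation gives $Q_ib=\frac{2s}{n_i}\mathbf{1}_{n_i}-b$. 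When $s=0$ or $s=n_i$ this returns $b$ unchanged, while when $s=n_i/2$ it returns $\mathbf{1}_{n_i}-b$, the complementary indicator vector --- precisely the replacement of the $n_i/2$ neighbors of $v$ in $C_i$ by the remaining $n_i/2$ vertices that defines $G^\pi$. Hence $Q\,A_G\,Q=A_{G^\pi}$, the two adjacency matrices are orthogonally similar, and $G,\,G^\pi$ are cospectral. I expect the only delicate point to be this cancellation in the cell-to-cell blocks: one must notice that (1) yields constant sums in both directions and that the edge count ties $r$ and $c$ together via $r n_i=c n_j$; everything else is a straightforward substitution.
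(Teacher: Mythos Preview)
Your argument is correct and follows exactly the same route as the paper: build $Q=\mathrm{diag}(Q_{n_1},\dots,Q_{n_t},I_d)$ with $Q_{n_i}=\tfrac{2}{n_i}J_{n_i}-I_{n_i}$, and verify $QA_GQ=A_{G^\pi}$ block by block. The only cosmetic difference is that the paper packages the cell--cell step as a ready-made ``$Q$-property'' (namely $Q_mXQ_n=X$ for any matrix $X$ with constant row and column sums), whereas you expand the product explicitly and appeal to the double edge count $rn_i=cn_j$ to cancel the two $J$-terms; these are the same computation.
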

\begin{proof}
For each positive integer $m$, put
\begin{eqnarray}\label{defiQm}
Q_m = \frac{2}{m}J_m-I_m.
\end{eqnarray}
The following properties, called the {\bf $Q$-properties}, can be easily verified:
\begin{itemize}
\item $Q_m^2=I_m$;
\item if $X$ is an $m\times n$ matrix with constant row sums and constant column sums, then $Q_mXQ_n = X$;
\item if $\textbf{x}$ is a vector with $m$ entries, $m/2$ of which are $0$ and $m/2$ of which are $1$, then $Q_m\textbf{x} = {\bf 1}_m-\textbf{x}$.
\end{itemize}
By choosing a suitable ordering in $V_G$, we get that the adjacency matrix of $G$ is
$$
A_G = \left(
      \begin{array}{ccccc}
        C_1 & C_{12} & \cdots & C_{1t} & D_1 \\
        C_{12}^T & C_2 & \cdots & C_{2t} & D_2 \\
        \vdots & \vdots &  & \vdots & \vdots \\
        C_{1t}^T & C_{2t}^T & \cdots & C_t & D_t \\
        D_1^T & D_2^T & \cdots & D_t^T & D \\
      \end{array}
    \right)
$$
where:
\begin{itemize}
\item for each $1\leq i,j\leq t$, the matrix $C_i$ is symmetric of order $n_i$, and the matrix $C_{ij}$ is $n_i\times n_j$;
\item for each $i=1,\ldots, t$, the matrix $D_{i}$ is $n_i\times d$;
\item $D$ is a symmetric matrix of order $d$.
\end{itemize}
Moreover, for each $i=1,\ldots, t$, let us denote by ${\bf d}_i^j$ the $j$-th column of $D_i$, with $j=1,\ldots, d$. The assumptions on the partition $\pi$ ensure that each $C_i$ and $C_{ij}$ has constant row sums and constant column sums, and each column ${\bf d}_i^j$ has either $0$, $n_i/2$ or $n_i$ entries equal to $1$. If we put $Q = {\rm diag} (Q_{n_1}, Q_{n_2},\ldots, Q_{n_t}, I_{d})$, with $Q_{n_i}$ as in Eq. \eqref{defiQm}, then
$$
QA_GQ  =  \left(
      \begin{array}{ccccc}
        C_1 & C_{12} & \cdots & C_{1t} & \tilde{D}_1 \\
        C_{12}^T & C_2 & \cdots & C_{2t} & \tilde{D}_2 \\
        \vdots & \vdots &  & \vdots & \vdots \\
        C_{1t}^T & C_{2t}^T & \cdots & C_t & \tilde{D}_t \\
        \tilde{D}_1^T & \tilde{D}_2^T & \cdots & \tilde{D}_t^T & D \\
      \end{array}
    \right)
$$
where, for each $i=1,\ldots, t$, and for each column $\tilde{{\bf d}}_{i}^j$ of $\tilde{D}_i$, one has
$$
\tilde{{\bf d}}_{i}^j=Q_{n_i}{\bf d}_{i}^j= \left\{
                                              \begin{array}{ll}
                                               {\bf d}_{i}^j & \hbox{if } {\bf d}_{i}^j= {\bf 0}_{n_i} \hbox{ or } {\bf d}_{i}^j= {\bf 1}_{n_i}\\
                                                {\bf 1}_{n_i}-{\bf d}_{i}^j & \hbox{otherwise.}
                                              \end{array}
                                            \right.
$$
Therefore, $QA_GQ$ is the adjacency matrix of $G^\pi$. As $Q^2=I_{|V_G|}$, the matrices $QA_GQ$ and $A_G$ are similar, and the claim is proved.
\end{proof}

A variant of the previous theorem can be adapted to $(0,1)$-matrices as follows \cite{HaeSpe}.
\begin{prop}\label{propgodsilunsigned}
Let $N$ be a $(0,1)$-matrix of order $b\times c$ whose column sums
are $0, b$ or $b/2$. Define $\tilde{N}$ to be the matrix obtained from $N$ by
replacing each column ${\bf n}$ with $b/2$ ones by its complement ${\bf 1}_b-{\bf n}$.
Let $B$ be a symmetric $b\times b$ matrix with constant row (and
column) sums, and let $C$ be a symmetric $c\times c$ matrix. Then the matrices
$$
M= \left(
     \begin{array}{cc}
       B & N \\
       N^T & C \\
     \end{array}
   \right)    \qquad \mbox{and } \qquad \tilde{M} = \left(
                                         \begin{array}{cc}
                                           B & \tilde{N} \\
                                           \tilde{N}^T & C \\
                                         \end{array}
                                       \right)
$$
are cospectral.
\end{prop}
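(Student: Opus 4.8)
The plan is to mimic the block‐conjugation argument of Theorem \ref{godsilmckaythm}, but now with a single block $C_i$ of size $b$ playing the role of $B$. First I would set $Q = \mathrm{diag}(Q_b, I_c)$, where $Q_b = \frac{2}{b}J_b - I_b$ is the matrix defined in Eq.~\eqref{defiQm}. Since $Q_b^2 = I_b$ by the first $Q$-property, we have $Q^2 = I_{b+c}$, so $Q$ is an involution and the matrices $M$ and $QMQ$ are similar; consequently they have the same characteristic polynomial and are cospectral. The entire task therefore reduces to the purely computational claim that $QMQ = \tilde M$.

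To verify this I would expand the block product
\[
QMQ = \begin{pmatrix} Q_b & O_{b,c} \\ O_{c,b} & I_c \end{pmatrix}
\begin{pmatrix} B & N \\ N^T & C \end{pmatrix}
\begin{pmatrix} Q_b & O_{b,c} \\ O_{c,b} & I_c \end{pmatrix}
= \begin{pmatrix} Q_b B Q_b & Q_b N \\ N^T Q_b & C \end{pmatrix}.
\]
The lower-right block is visibly $C$, matching $\tilde M$. For the top-left block, the hypothesis that $B$ is symmetric with constant row and column sums lets me invoke the second $Q$-property (with both sides of size $b$) to conclude $Q_b B Q_b = B$, again matching $\tilde M$. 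It remains to identify the off-diagonal block $Q_b N$ with $\tilde N$, and by symmetry $N^T Q_b = (Q_b N)^T = \tilde N^T$.

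The off-diagonal block is where the real content lies, so I would treat it column by column. For each column $\mathbf{n}$ of $N$, the hypothesis says its sum is $0$, $b$, or $b/2$. If the column sum is $0$ then $\mathbf{n} = \mathbf{0}_b$; if it is $b$ then $\mathbf{n} = \mathbf{1}_b$; in either case $\mathbf{n}$ has constant entries, so the second $Q$-property (viewing $\mathbf{n}$ as a $b \times 1$ matrix) gives $Q_b \mathbf{n} = \mathbf{n}$, and the column is left unchanged, exactly as in the definition of $\tilde N$. If instead the column sum is $b/2$, then $\mathbf{n}$ is a $0/1$ vector with $b/2$ ones and $b/2$ zeros, so the third $Q$-property applies and yields $Q_b \mathbf{n} = \mathbf{1}_b - \mathbf{n}$, which is precisely the complement replacement prescribed for $\tilde N$. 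Thus $Q_b N = \tilde N$ column by column, completing the identification $QMQ = \tilde M$ and hence the cospectrality.

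I do not anticipate a genuine obstacle here: the statement is a direct specialization of the mechanism already established, and the only mild subtlety is bookkeeping, namely recognizing that a column vector is a legitimate $b \times 1$ matrix so that the constant–row–sum form of the second $Q$-property covers the all-zero and all-one columns uniformly, while the third $Q$-property handles the balanced columns.
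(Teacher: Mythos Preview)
Your proposal is correct and matches the paper's approach: the paper presents Proposition~\ref{propgodsilunsigned} as a direct variant of Theorem~\ref{godsilmckaythm} (citing \cite{HaeSpe}) without writing out a separate proof, and your argument is precisely the $t=1$ specialization of that conjugation-by-$Q$ proof. The block computation and the columnwise use of the $Q$-properties are exactly the intended mechanism.
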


\begin{example}\label{exexex}\rm
The graph $G$ on the left of Fig. \ref{examplegodsilnonsigned} satisfies the hypotheses of Theorem \ref{godsilmckaythm}, with $V_G = C_1 \sqcup D$, where
$$
C_1=\{1,2,3,4,5,6,7,8\} \qquad D=\{9,10\}.
$$
On the right, the graph $G^\pi$ obtained by GM-switching. Notice that $G$ and $G^\pi$ are cospectral by Theorem \ref{godsilmckaythm}, but they are clearly non-isomorphic. Finally, we have
$$
p_G(x) = p_{G^{\pi}}(x) = x(x+2)(x+1)(x^7-3x^6-9x^5+23x^4+23x^3-43x^2-18x+16).
$$
\begin{figure}[h]
\begin{center}
\psfrag{1}{$1$}\psfrag{2}{$2$} \psfrag{3}{$3$} \psfrag{4}{$4$} \psfrag{5}{$5$} \psfrag{6}{$6$} \psfrag{7}{$7$} \psfrag{8}{$8$} \psfrag{9}{$9$} \psfrag{10}{$10$}
\psfrag{G}{$G$} \psfrag{Gpi}{$G^\pi$}
\includegraphics[width=0.5\textwidth]{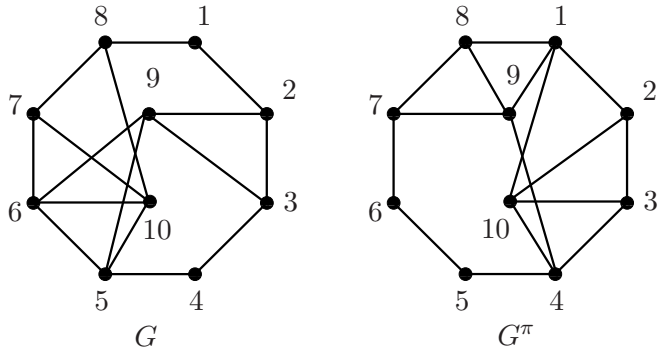} \caption{The graphs $G$ and $G^\pi$ of Example \ref{exexex}.}  \label{examplegodsilnonsigned}
\end{center}
\end{figure}
\end{example}

\begin{remark}\rm
The idea of the GM-switching is related to the notion of ``equitable partition'' of the vertex set of a graph.
Equitable partitions have a number of significant applications in Graph theory: for example, the vertex set partition of a graph under the action of a group of automorphisms is always equitable. This fact has been used in the context of graph isomorphism algorithms (we refer to the book \cite{godsil} for more details).\\
\indent Let $u\in V_G$ and denote by $N(u)$ the neighborhood of $u$ in $G=(V_G,E_G)$, that is, $N(u)=\{v\in V_G: v\sim u\}$. Given a partition $\pi=\{C_1,\ldots, C_t\}$ of $V_G$, one says that $\pi$ is equitable if, for all $i\in \{1,\ldots, t\}$, and all $v,w\in C_i$, one has $|N(v)\cap C_j|=|N(w)\cap C_j|$, for each $j\in \{1,\ldots, t\}$ (see, for instance, \cite{godsil2,godsil80} for several equivalent characterizations of equitability).
It is then clear that the conditions required in the GM-switching are equivalent to say that $\{C_1,C_2,\ldots, C_t\}$ is an equitable partition of the subgraph of $G$ induced by the vertices in $V_G\setminus D$.
\end{remark}

\section{A GM-switching for signed graphs}\label{sect3}
In this section we are going to define a notion of GM-switching for signed graphs. In other words, we want to extend the results of the previous section from the setting of $(0,1)$-matrices to the set of $(-1,0,1)$-matrices. We already know that, for every positive integer $m$, the matrix $Q_m = \frac{2}{m}J_m-I_m$ satisfies the $Q$-properties given in Section \ref{sectionGMunsigned}. We also need the following lemma.

\begin{lemma}\label{sixconditions}
Let $Q_m = \frac{2}{m}J_m-I_m$, and ${\bf x}=(x_i)_{i=1,\ldots, m}$ be a vector with entries in $\{-1,0,1\}$. Then:
\begin{enumerate}
\item if ${\bf x}$ is a vector whose entries have sum $0$, then $Q_{m}{\bf x} = -{\bf x}$;
\item if $m$ is even, and ${\bf x}$ is a vector with half entries equal to $0$, and half entries equal to $1$, then $Q_m{\bf x} = {\bf 1}_m-{\bf x}$;
\item if $m$ is even, and ${\bf x}$ is a vector with half entries equal to $0$, and half entries equal to $-1$, then $Q_m{\bf x}= -{\bf 1}_m-{\bf x}$;
\item if ${\bf x}= {\bf 1}_m$, then $Q_{m}{\bf x} = {\bf x}$;
\item if ${\bf x}=-{\bf 1}_m$, then $Q_{m}{\bf x} = {\bf x}$.
\end{enumerate}
\end{lemma}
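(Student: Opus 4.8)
The plan is to collapse all five cases to a single master identity and then read each statement off by a one-line substitution. The key observation is that the all-ones matrix $J_m$ sends any vector to a multiple of ${\bf 1}_m$ determined by its entry-sum: writing $s=\sum_{i=1}^m x_i$, every row of $J_m$ equals ${\bf 1}_m^T$, so $J_m{\bf x}=s\,{\bf 1}_m$. Inserting this into the definition $Q_m=\frac{2}{m}J_m-I_m$ gives the formula
$$
Q_m{\bf x}=\frac{2}{m}J_m{\bf x}-{\bf x}=\frac{2s}{m}\,{\bf 1}_m-{\bf x},
$$
which holds for every ${\bf x}$ and already encodes all the required conclusions.

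Next I would compute $s$ for each prescribed vector and substitute. In (1) the hypothesis is precisely $s=0$, so the formula yields $Q_m{\bf x}=-{\bf x}$. In (2) there are $m/2$ ones and the remaining entries are zero, hence $s=m/2$ and $\frac{2s}{m}=1$, giving $Q_m{\bf x}={\bf 1}_m-{\bf x}$; case (3) is the mirror image, with $s=-m/2$ and $\frac{2s}{m}=-1$, so $Q_m{\bf x}=-{\bf 1}_m-{\bf x}$. For (4) the vector ${\bf x}={\bf 1}_m$ has $s=m$, whence $\frac{2s}{m}=2$ and $Q_m{\bf x}=2{\bf 1}_m-{\bf 1}_m={\bf 1}_m={\bf x}$; for (5), ${\bf x}=-{\bf 1}_m$ gives $s=-m$, so $\frac{2s}{m}=-2$ and $Q_m{\bf x}=-2{\bf 1}_m+{\bf 1}_m=-{\bf 1}_m={\bf x}$. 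Thus (4) and (5) both record that $\pm{\bf 1}_m$ are fixed by $Q_m$, which is also immediate from $J_m{\bf 1}_m=m{\bf 1}_m$.

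I do not expect any genuine obstacle here: once the entry-sum reduction $J_m{\bf x}=s\,{\bf 1}_m$ is noticed, the five items differ only in the numerical value of $\frac{2s}{m}\in\{0,1,-1,2,-2\}$, and each parity assumption (the evenness of $m$ in (2) and (3)) serves only to guarantee that the stated half-and-half vectors exist with integer entry counts. The lemma is therefore a direct unpacking of the linear action of $Q_m$ rather than a step requiring a new idea; its purpose is to supply, for signed adjacency matrices, the analogue of the third $Q$-property used in the unsigned proof of Theorem \ref{godsilmckaythm}, now refined to account for the extra sign patterns allowed by $(-1,0,1)$-entries.
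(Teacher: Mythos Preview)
Your argument is correct and follows essentially the same direct-computation approach as the paper: the paper verifies (1) entrywise from $q_{ij}=\frac{2}{m}$ ($i\neq j$), $q_{ii}=\frac{2-m}{m}$ and obtains (4) from the fact that $Q_m$ has row sums $1$, deducing the remaining cases from these and the earlier $Q$-properties. Your single master identity $Q_m{\bf x}=\frac{2s}{m}{\bf 1}_m-{\bf x}$ with $s=\sum_i x_i$ is simply a cleaner packaging of that same computation, handling all five cases uniformly instead of singling out (1) and (4).
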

\begin{proof}
We only prove (1) and (4), because (2) follows from the $Q$-properties, (3) follows from (2), and (5) follows from (4). First of all, notice that
$$
(Q_m)_{ij}=q_{ij}= \left\{
                                                     \begin{array}{ll}
                                                       \frac{2}{m} & \hbox{if } i\neq j\\
                                                       \frac{2-m}{m} & \hbox{if } i=j.
                                                     \end{array}
                                                   \right.
$$
For (1), we have
$$
(Q_m{\bf x})_{i} = \sum_{k=1}^m q_{ik}x_{k} = \frac{2}{m}(0-x_{i})+\frac{2-m}{m}x_i= -x_i
$$
and the claim is proved. The property (4) follows from the fact that the matrix $Q_m$ has constant row and column sums equal to $1$, by definition.
\end{proof}

Now let $\Gamma=(G,\sigma)$ be a signed graph with underlying graph $G$, vertex set $V_G$ and edge set $E_G$, with signature function $\sigma:E_G\to \{\pm 1\}$. The graph $G$ is supposed to be simple. For a given vertex $v\in V_G$, let us denote by $d^+(v)$ the number of positive edges incident to $v$ in $\Gamma$, and by  $d^-(v)$ the number of negative edges incident to $v$ in $\Gamma$. Moreover, we put $d^{\pm}(v) = d^+(v) -d^-(v)$, which is usually called the \emph{net-degree} of $v$ in $\Gamma$.

Now assume that $\pi=\{C_1, \ldots, C_t, D\}$ is a partition of $V_G$, with $|C_i|=n_i$ for each $i=1,\ldots, t$, and $|D|=d$. For a given vertex $v\in V_G$, let $d^+_i(v)$ denote the number of positive edges connecting $v$ to some vertex of $C_i$, and similarly for $d^-_i(v)$, so that the $i$-th net-degree of $v$ is defined as $d^{\pm}_i(v) = d^+_i(v)-d^-_i(v)$. Now, suppose that:
\begin{enumerate}
\item for each $1\leq i,j \leq t$, any two vertices in $C_i$ have the same $j$-th net-degree;
\item for each $i=1,\ldots, t$ and $v\in D$:
\begin{itemize}
\item either $d^{\pm}_i(v)=0$ (in particular, $v$ may have no neighbor in $C_i$);
\item or $d^+_i(v) = n_i/2$ and $d^-_i(v)=0$;
\item or $d^-_i(v) = n_i/2$ and $d^+_i(v)=0$;
\item or $d^+_i(v)=n_i$;
\item or $d^-_i(v)=n_i$.
\end{itemize}
\end{enumerate}
Then the graph $\Gamma^{\pi}$ constructed from $\Gamma$ by local switching with respect to the partition $\pi$ is obtained as follows: for every $v\in D$ and $1\leq i\leq t$:
\begin{itemize}
\item if $d^{\pm}_i(v)=0$, then one changes the sign of any edge connecting $v$ to a vertex of $C_i$;
\item if $d^+_i(v) = n_i/2$ and $d^-_i(v)=0$, then one deletes the existing (positive) edges from $v$ to $C_i$ and join instead $v$ to the remaining $n_i/2$ vertices of $C_i$ by positive edges;
\item if $d^-_i(v) = n_i/2$ and $d^+_i(v)=0$, then one deletes the existing (negative) edges from $v$ to $C_i$ and join instead $v$ to the remaining $n_i/2$ vertices of $C_i$ by negative edges.
\end{itemize}

\begin{theorem}\label{godsilmckaysignedthm}
 The signed graphs $\Gamma$ and $\Gamma^\pi$ are cospectral.
\end{theorem}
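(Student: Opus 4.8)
The plan is to imitate the proof of Theorem \ref{godsilmckaythm} at the level of $(-1,0,1)$-matrices, using the very same conjugating matrix $Q = {\rm diag}(Q_{n_1}, \ldots, Q_{n_t}, I_d)$ with $Q_{n_i}$ as in Eq. \eqref{defiQm}. Since each $Q_{n_i}^2 = I_{n_i}$ by the first $Q$-property, we have $Q^2 = I_{|V_G|}$, so $Q A_\Gamma Q$ is similar to $A_\Gamma$ and hence cospectral with it. The whole argument therefore reduces to the single identity $Q A_\Gamma Q = A_{\Gamma^\pi}$, after which cospectrality is automatic.

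First I would write $A_\Gamma$ in the block form dictated by the ordering $C_1, \ldots, C_t, D$, exactly as in the unsigned case but now with symmetric blocks $C_i$ of order $n_i$, rectangular blocks $C_{ij}$ of size $n_i \times n_j$, columns ${\bf d}_i^j \in \{-1,0,1\}^{n_i}$ forming the blocks $D_i$, and a symmetric block $D$ of order $d$. The crucial translation is that, for a signed graph, the $j$-th net-degree $d_j^\pm(v)$ of a vertex $v \in C_i$ equals precisely the sum of the entries of the row of $v$ restricted to the columns indexed by $C_j$: positive edges contribute $+1$, negative edges $-1$, and non-adjacencies $0$. Consequently, hypothesis (1) says exactly that each diagonal block $C_i$ (taking $j=i$) and each block $C_{ij}$ has constant row sums, and by applying it to both ordered pairs $(i,j)$ and $(j,i)$ also constant column sums. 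The second $Q$-property then yields $Q_{n_i} C_i Q_{n_i} = C_i$ and $Q_{n_i} C_{ij} Q_{n_j} = C_{ij}$, while the bottom-right block $D$ is fixed by $I_d$. Hence conjugation by $Q$ leaves every block untouched except the $D_i$, which are replaced by $Q_{n_i} D_i$, i.e.\ each column ${\bf d}_i^j$ is replaced by $Q_{n_i} {\bf d}_i^j$.

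It then remains to compute $Q_{n_i} {\bf d}_i^j$ and to check that the result matches the local switching defining $\Gamma^\pi$. Here hypothesis (2) lists five mutually exclusive alternatives for a column ${\bf d}_i^j$ (which records the signs of the edges joining $v \in D$ to $C_i$), and these correspond one-to-one to the cases of Lemma \ref{sixconditions}: the case $d_i^\pm(v)=0$ gives $Q_{n_i}{\bf d}_i^j = -{\bf d}_i^j$ (all incident signs flipped); the case $d_i^+(v)=n_i/2$, $d_i^-(v)=0$ gives ${\bf 1}_{n_i} - {\bf d}_i^j$ (positive edges moved to the complementary half); the case $d_i^-(v)=n_i/2$, $d_i^+(v)=0$ gives $-{\bf 1}_{n_i} - {\bf d}_i^j$ (negative edges moved to the complementary half); and the cases $d_i^+(v)=n_i$ or $d_i^-(v)=n_i$ give $Q_{n_i}{\bf d}_i^j = {\bf d}_i^j$ (no change). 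A short check confirms that each transformed column again lies in $\{-1,0,1\}^{n_i}$ and that $Q A_\Gamma Q$ is symmetric (as $Q$ and $A_\Gamma$ are), so it is a genuine signed adjacency matrix, and by construction it is that of $\Gamma^\pi$.

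I expect the main obstacle to be one of bookkeeping rather than of idea: one must verify that the algebraic outcomes of Lemma \ref{sixconditions} agree with the three edge-operations spelled out in the definition of $\Gamma^\pi$ (together with ``no change'' in the two full-degree cases). The genuinely new phenomenon, absent from the unsigned setting, is the alternative $d_i^\pm(v)=0$, which may involve cancellation between positive and negative edges and produces a global sign reversal of the edges from $v$ to $C_i$; checking that this still returns a valid $(-1,0,1)$-matrix, and is correctly reproduced by part (1) of Lemma \ref{sixconditions}, is the one point deserving care.
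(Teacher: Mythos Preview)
Your proposal is correct and follows essentially the same approach as the paper: conjugation of $A_\Gamma$ by $Q={\rm diag}(Q_{n_1},\ldots,Q_{n_t},I_d)$, using the second $Q$-property to fix the blocks $C_i$ and $C_{ij}$ and invoking Lemma \ref{sixconditions} to handle the columns of the $D_i$. If anything, you spell out the dictionary between hypothesis (2) and the five cases of Lemma \ref{sixconditions} more explicitly than the paper does, but the argument is the same.
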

\begin{proof}
By choosing a suitable ordering in $V_G$, the signed adjacency matrix of $\Gamma$ can be written as
$$
A_\Gamma = \left(
      \begin{array}{ccccc}
        C_1 & C_{12} & \cdots & C_{1t} & D_1 \\
        C_{12}^T & C_2 & \cdots & C_{2t} & D_2 \\
        \vdots & \vdots &  & \vdots & \vdots \\
        C_{1t}^T & C_{2t}^T & \cdots & C_t & D_t \\
        D_1^T & D_2^T & \cdots & D_t^T & D \\
      \end{array}
    \right)
$$
where:
\begin{itemize}
\item for each $1\leq i\leq t$, the matrix $C_i$ is symmetric of order $n_i$, with constant row (and column) sums;
\item for each $1\leq i,j\leq t$, the matrix $C_{ij}$ is an $n_i\times n_j$ matrix with constant row sums and column sums;
\item for each $1\leq i\leq t$, every column vector of $D_i$ satisfies one of the five conditions of Lemma \ref{sixconditions};
\item the matrix $D$ is symmetric of order $d$.
\end{itemize}
If we put $Q = {\rm diag} (Q_{n_1}, Q_{n_2},\ldots, Q_{n_t}, I_{d})$, where $Q_{n_i}$ is defined as in Eq. \eqref{defiQm}, then
$$
QA_\Gamma Q  =  \left(
      \begin{array}{ccccc}
        C_1 & C_{12} & \cdots & C_{1t} & \tilde{D}_1 \\
        C_{12}^T & C_2 & \cdots & C_{2t} & \tilde{D}_2 \\
        \vdots & \vdots &  & \vdots & \vdots \\
        C_{1t}^T & C_{2t}^T & \cdots & C_t & \tilde{D}_t \\
        \tilde{D}_1^T & \tilde{D}_2^T & \cdots & \tilde{D}_t^T & D \\
      \end{array}
    \right)
$$
where, for each $i=1,\ldots, t$, the column vectors of $D_i$ change in $\tilde{D}_i$ according to Lemma \ref{sixconditions}. Therefore, $QA_\Gamma Q$ is the adjacency matrix of $\Gamma^\pi$, and the statement follows.
\end{proof}

\begin{example}\label{example2}\rm
Let $\Gamma=(G,\sigma)$ be the signed graph with adjacency matrix

$$
A_\Gamma = \left(
      \begin{array}{ccc}
        C_1 & C_{12} & D_1 \\
C_{12}^T & C_2 & D_2 \\
 D_1^T & D_2^T & D \\ \end{array}
    \right)=\left(
   \begin{array}{ccc|cccc|c}
     0 & 1 & 1 & 0 & 0 & -1 & 1 & 0 \\
     1 & 0 & 1 & -1 & 0 & 1 & 0 & 1 \\
     1 & 1 & 0 & 1 & 0 & 0 & -1 & -1 \\  \hline
     0 & -1 & 1 & 0 & -1 & 0 & 1 & 1 \\
     0 & 0 & 0 & -1 & 0 & 1 & 0 & 1 \\
     -1 & 1 & 0 & 0 & 1 & 0 & -1 & 0 \\
     1 & 0 & -1 & 1 & 0 & -1 & 0 & 0 \\   \hline
     0 & 1 & -1 & 1 & 1 & 0 & 0 & 0 \\
   \end{array}
 \right)
$$

which is depicted on the left in Fig. \ref{examplegodsilsigned}. In particular, $C_1=\{1,2,3\}$, $C_2=\{4,5,6,7\}$, and $D=\{8\}$.
\begin{figure}[h]
\begin{center}
\psfrag{1}{$1$}\psfrag{2}{$2$} \psfrag{3}{$3$} \psfrag{4}{$4$} \psfrag{5}{$5$} \psfrag{6}{$6$} \psfrag{7}{$7$} \psfrag{8}{$8$}
\psfrag{Gamma}{$\Gamma$} \psfrag{Gammapi}{$\Gamma^\pi$}
\includegraphics[width=0.8\textwidth]{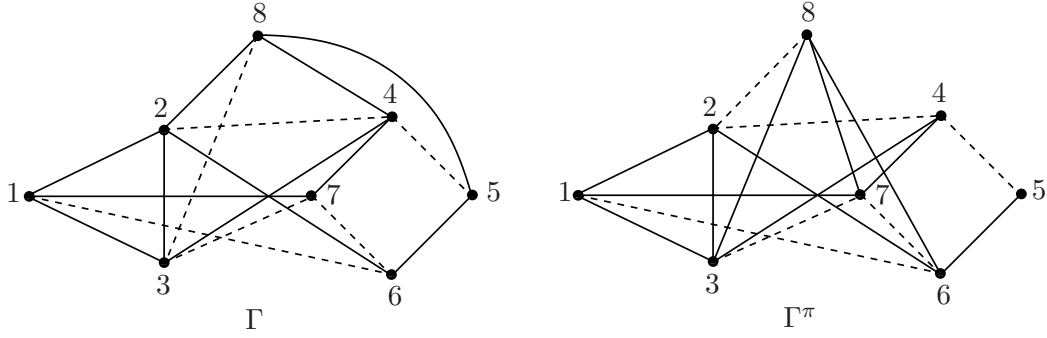} \caption{The graphs $\Gamma$ and $\Gamma^\pi$ of Example \ref{example2}.}  \label{examplegodsilsigned}
\end{center}
\end{figure}
By putting $Q_3 = \frac{2}{3}J_3-I_3$ and $Q_4 = \frac{1}{2}J_4-I_4$, and $Q = {\rm diag} (Q_{3}, Q_{4}, I_{1})$, and computing $QA_{\Gamma}Q$, one obtains the adjacency matrix of the graph $\Gamma^\pi$, represented on the right in Fig. \ref{examplegodsilsigned}. Therefore, the graphs $\Gamma$ and $\Gamma^\pi$ are cospectral, and
$$
p_\Gamma(x) = p_{\Gamma^{\pi}}(x) = (x-2)(x^7+2x^6-13x^5-14x^4+39x^3+18x^2-20x-8).
$$
On the other hand, $\Gamma$ and $\Gamma^{\pi}$ are not switching isomorphic, as the vertex $5$ has degree two in $\Gamma^\pi$, but no vertex has degree two in $\Gamma$.
\end{example}

As a consequence of Theorem \ref{godsilmckaysignedthm}, we get the following proposition, which is a $(-1,0,1)$-matrix analogue of Proposition \ref{propgodsilunsigned}.
\begin{prop}\label{propgodsilsigned}
Let $N$ be a $(-1,0,1)$-matrix of order $b\times c$ whose column ${\bf n}^j$ may be such that:
\begin{itemize}
\item either $\sum_{h=1}^bn^j_h=0$;
\item or $b/2$ entries of ${\bf n}^j$ are equal to $1$ and $b/2$ entries of ${\bf n}^j$ are equal to $0$;
\item or $b/2$ entries of ${\bf n}^j$ are equal to $-1$ and $b/2$ entries of ${\bf n}^j$ are equal to $0$;
\item or ${\bf n}^j={\bf 1}_b$;
\item or ${\bf n}^j=-{\bf 1}_b$.
\end{itemize}
Define $\tilde{N}$ to be the matrix obtained from $N$ by
replacing each column ${\bf n}^j$ whose entries have sum $0$ with $-{\bf n}^j$; each column ${\bf n}^j$ with half entries equal to $1$ and half entries equal to $0$ with ${\bf 1}_b-{\bf n}^j$; and finally each column ${\bf n}^j$ with half entries equal to $-1$ and half entries equal to $0$ with $-{\bf 1}_b-{\bf n}^j$. Let $B$ be a symmetric $b\times b$ matrix with constant row (and column) sums, and let $C$ be a symmetric $c\times c$ matrix. Then the matrices
$$
M= \left(
     \begin{array}{cc}
       B & N \\
       N^T & C \\
     \end{array}
   \right)    \qquad \mbox{and } \qquad \tilde{M} = \left(
                                         \begin{array}{cc}
                                           B & \tilde{N} \\
                                           \tilde{N}^T & C \\
                                         \end{array}
                                       \right)
$$
are cospectral.
\end{prop}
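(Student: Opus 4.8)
The plan is to mimic, at the purely matricial level, the conjugation argument used in the proof of Theorem \ref{godsilmckaysignedthm}, now with the single block $B$ playing the role of $C_1$, the rectangular block $N$ playing the role of $D_1$, and $C$ playing the role of $D$. Concretely, I would set $Q = {\rm diag}(Q_b, I_c)$, with $Q_b = \frac{2}{b}J_b - I_b$ as in Eq. \eqref{defiQm}, and prove the identity $\tilde M = Q M Q$; the cospectrality of $M$ and $\tilde M$ then follows at once because $Q$ is a symmetric involution.

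First I would record that $Q$ is its own inverse: since $Q_b^2 = I_b$ (the first $Q$-property) and $I_c^2 = I_c$, we have $Q^2 = I_{b+c}$. Hence $Q = Q^{-1}$, so $QMQ = QMQ^{-1}$ is similar to $M$ and in particular shares its characteristic polynomial. It therefore suffices to identify $QMQ$ with $\tilde M$.

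Next I would carry out the block multiplication
\[
QMQ = \begin{pmatrix} Q_b & O_{b,c} \\ O_{c,b} & I_c \end{pmatrix} \begin{pmatrix} B & N \\ N^T & C \end{pmatrix} \begin{pmatrix} Q_b & O_{b,c} \\ O_{c,b} & I_c \end{pmatrix} = \begin{pmatrix} Q_b B Q_b & Q_b N \\ N^T Q_b & C \end{pmatrix}
\]
and evaluate the three nontrivial blocks. The $(1,1)$ block is handled by the second $Q$-property: since $B$ is symmetric with constant row and column sums, $Q_b B Q_b = B$. The $(1,2)$ block is the heart of the matter and is settled column by column via Lemma \ref{sixconditions}: applying $Q_b$ to a column ${\bf n}^j$ of $N$ sends a zero-sum column to $-{\bf n}^j$ (case 1), a column with $b/2$ ones and $b/2$ zeros to ${\bf 1}_b - {\bf n}^j$ (case 2), a column with $b/2$ minus-ones and $b/2$ zeros to $-{\bf 1}_b - {\bf n}^j$ (case 3), while the constant columns ${\bf 1}_b$ and $-{\bf 1}_b$ are fixed (cases 4 and 5). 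These are exactly the five replacement rules defining $\tilde N$, so $Q_b N = \tilde N$; transposing and using the symmetry of $Q_b$ gives $N^T Q_b = \tilde N^T$ for the $(2,1)$ block. Assembling the blocks yields $QMQ = \tilde M$, which completes the argument.

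I do not expect a genuine obstacle: the proof is a direct transcription of that of Theorem \ref{godsilmckaysignedthm} in the case $t=1$, the only additional observation being that $B$ and $C$ need not be adjacency matrices for the computation to go through. The single point demanding care is the bookkeeping in the $(1,2)$ block, namely verifying that each of the five admissible column types of $N$ is matched with the correct image under $Q_b$, and hence with the correct column of $\tilde N$ — in particular that the constant columns ${\bf 1}_b$ and $-{\bf 1}_b$ are left untouched, consistently with the definition of $\tilde N$, which only alters columns of the first three types.
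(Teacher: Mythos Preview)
Your proposal is correct and is exactly the argument the paper has in mind: the proposition is stated there simply as ``a consequence of Theorem \ref{godsilmckaysignedthm}'', and your proof spells out that consequence by specializing to $t=1$, conjugating by $Q={\rm diag}(Q_b,I_c)$, and invoking the $Q$-properties together with Lemma \ref{sixconditions} column by column. There is nothing to add or correct.
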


\begin{remark}\rm
Observe that our Theorem \ref{godsilmckaysignedthm} contains and extends the classical construction by Godsil and McKay (Theorem \ref{godsilmckaythm}), as in the unsigned case the signature is constantly equal to $1$, and the condition $d^{\pm}_i(v)=0$ is equivalent to the non existence of edges connecting the vertex $v$ to the part $C_i$.
\end{remark}

\section{A different switching for unsigned graphs}\label{sect4}
The GM-switching described in Section \ref{sectionGMunsigned} is obtained by conjugating the adjacency matrix of the graph $G=(V_G,E_G)$ by the rational orthogonal matrix $Q$. In the paper \cite{wangqiuhu} the authors define a new construction, based on a different rational matrix, and allowing to produce pairs of non-isomorphic cospectral graphs which cannot be obtained by the classical GM-switching. We will refer to this construction as the ``Generalized GM-switching'' (or, G-GM-switching for short), and we recall it in what follows.

Let $G=(V_G,E_G)$ be a graph, and let $p$ be an odd prime. Suppose that the vertex set $V_G$ admits the partition $V_1\sqcup V_2 \sqcup V$, with $|V_1|=|V_2|=p$ and $|V|=d$.
For $i=1,2$, let $d_i(v)$ be the number of edges connecting $v$ to some vertex of $V_i$, and assume that:
\begin{itemize}
\item $d_1(v)-d_2(v) = d_2(u)-d_1(u) = \ell$ for all $v\in V_1$ and $u\in V_2$;
\item every vertex in $V$ either is adjacent to all the vertices in $V_1$ and has no neighbor in $V_2$, or is adjacent to all the vertices in $V_2$ and has no neighbor in $V_1$, or has the same number of neighbors in $V_1$ and $V_2$.
\end{itemize}
For such a graph $G$, the G-GM-switching producing the new graph $G'$ works as follows: for every vertex $v\in V$ which is adjacent to exactly all the vertices of $V_1$ (resp. $V_2$), switch the edges so that $v$ is adjacent to exactly all the vertices of $V_2$ (resp. $V_1$) in $G'$, and leave all the other edges unchanged.

\begin{theorem}\cite{wangqiuhu}\label{wangqiuhu}
The graphs $G$ and $G'$ are cospectral.
\end{theorem}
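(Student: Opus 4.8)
The plan is to mimic the proof of Theorem~\ref{godsilmckaythm}: I will exhibit a rational orthogonal matrix $R$, acting as the identity on the coordinates indexed by $V$, such that $R\,A_G\,R^{T}=A_{G'}$; cospectrality is then immediate from similarity. Ordering the vertices so that $V_1$ comes first, then $V_2$, then $V$, I write
$$
A_G=\begin{pmatrix} B_1 & B_{12} & N_1\\ B_{12}^{T} & B_2 & N_2\\ N_1^{T} & N_2^{T} & C\end{pmatrix},
$$
where $B_1,B_2$ are the $p\times p$ adjacency blocks inside $V_1,V_2$, the block $B_{12}$ records the edges between $V_1$ and $V_2$, each $N_i$ is $p\times d$, and $C$ is the $d\times d$ block inside $V$. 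Setting $P=\tfrac1p J_p$ (so that $P^{2}=P$, $P\mathbf{1}_p=\mathbf{1}_p$, and $P$ is the orthogonal projection onto $\langle\mathbf 1_p\rangle$), I take
$$
R_0=\begin{pmatrix} I_p-P & P\\ P & I_p-P\end{pmatrix},\qquad R={\rm diag}(R_0,I_d).
$$

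First I record the elementary properties of $R_0$: it is rational and symmetric, and a direct block computation using $P^{2}=P$ gives $(I_p-P)^2+P^2=I_p$ and $(I_p-P)P+P(I_p-P)=O_p$, so that $R_0^{2}=I_{2p}$; thus $R_0$ (and hence $R$) is an orthogonal involution. The crucial step is to check that $R_0$ commutes with the $(V_1\cup V_2)$-block $B=\left(\begin{smallmatrix}B_1 & B_{12}\\ B_{12}^{T} & B_2\end{smallmatrix}\right)$, equivalently that $R_0BR_0^{T}=B$, so that the switching alters no edge inside $V_1\cup V_2$. Here is where the hypotheses enter: the first hypothesis reads $(B_1-B_{12})\mathbf 1_p=\ell\,\mathbf 1_p$ and $(B_2-B_{12}^{T})\mathbf 1_p=\ell\,\mathbf 1_p$, with the \emph{same} constant $\ell$ on both parts. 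Writing $P=\tfrac1p\mathbf 1_p\mathbf 1_p^{T}$ and using that $B_1,B_2$ are symmetric, the top-left block of $R_0B-BR_0$ equals $P(B_{12}^{T}-B_1)-(B_{12}-B_1)P=-\ell P-(-\ell P)=O_p$, while the top-right block equals $P(B_2-B_{12})-(B_1-B_{12})P=\ell P-\ell P=O_p$; the remaining two blocks vanish by entirely analogous computations. I expect this commutation, and in particular the fact that it forces a single common value $\ell$, to be the main obstacle, since it is the only place where the net-degree hypothesis is genuinely used.

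It remains to identify $R_0$ applied to the block $\bigl(\begin{smallmatrix}N_1\\ N_2\end{smallmatrix}\bigr)$ with the switched block of $G'$, column by column. For a vertex $v\in V$ with column $\mathbf n=\bigl(\begin{smallmatrix}\mathbf n_1\\ \mathbf n_2\end{smallmatrix}\bigr)$ there are exactly the three cases allowed by the second hypothesis. If $\mathbf n_1=\mathbf 1_p$ and $\mathbf n_2=\mathbf 0_p$ (adjacent to all of $V_1$, none of $V_2$), then $R_0\mathbf n=\bigl(\begin{smallmatrix}\mathbf 0_p\\ \mathbf 1_p\end{smallmatrix}\bigr)$; the symmetric case is analogous, and these reproduce precisely the prescribed edge swap. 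If instead $v$ has the same number $k$ of neighbours in each part, then $P\mathbf n_1=P\mathbf n_2=\tfrac{k}{p}\mathbf 1_p$, whence $(I_p-P)\mathbf n_1+P\mathbf n_2=\mathbf n_1$ and $(I_p-P)\mathbf n_2+P\mathbf n_1=\mathbf n_2$, so $R_0\mathbf n=\mathbf n$ is left unchanged, exactly as required. In every case $R_0\mathbf n$ is again a $(0,1)$-vector, so $R\,A_G\,R^{T}$ is the adjacency matrix of a simple graph, namely $A_{G'}$ (the $C$-block being untouched and the diagonal remaining zero). Since $R$ is orthogonal, $A_G$ and $A_{G'}$ are similar, and the theorem follows; note that the primality of $p$ plays no role in the cospectrality argument itself, its purpose being to guarantee that $k=p/2$ is excluded, so that the partition lies genuinely outside the reach of ordinary GM-switching.
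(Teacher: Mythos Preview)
Your proof is correct and follows essentially the same route as the paper: your matrix $R_0=\left(\begin{smallmatrix} I_p-P & P\\ P & I_p-P\end{smallmatrix}\right)$ is precisely the matrix $U_{2p}=I_{2p}+\tfrac1p\left(\begin{smallmatrix}-J_p & J_p\\ J_p & -J_p\end{smallmatrix}\right)$ used there, and both arguments conjugate $A_G$ by ${\rm diag}(U_{2p},I_d)$ and check the $M$-block and the $C$-columns separately. The only cosmetic difference is that you verify $R_0B=BR_0$ (exploiting $R_0^2=I$) via the projection identity $XP=\ell P$ whenever $X\mathbf 1_p=\ell\mathbf 1_p$, whereas the paper expands $U_{2m}MU_{2m}-M$ directly into three block products; your formulation is a touch cleaner but not substantively different.
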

The proof is based on the fact that the adjacency matrices $A_G$ and $A_{G'}$ are conjugated by the rational orthogonal matrix $Q = {\rm diag}(U_{2p}, I_d)$, where $U_{2p} = I_{2p}+\frac{1}{p}\left(
                                                                                                                                                                  \begin{array}{cc}
                                                                                                                                                                    -J_p & J_p \\
                                                                                                                                                                    J_p & -J_p \\
                                                                                                                                                                  \end{array}
                                                                                                                                                                \right)$
and
$$
A_G = \left(
           \begin{array}{cc}
             M & C \\
             C^T & D \\
           \end{array}
         \right),  \qquad \textrm{with } \qquad  M=\left(
           \begin{array}{cc}
             A_1 & B \\
             B^T & A_2 \\
           \end{array}
         \right).
$$
Observe that the hypothesis on the graph $G$ implies that:
\begin{enumerate}
\item $A_1$ and $A_2$ are symmetric matrices of order $p$;
\item for each $i,i'=1,\ldots ,p$, one has $\sum_{j=1}^p ((A_1)_{ij}-B_{ij}) =  \sum_{j=1}^p ((A_2)_{i'j}-B_{ji'}) =\ell$;
\item each column ${\bf c}^j$ of $C$ is a vector of length $2p$ which can be partitioned into two semicolumns ${\bf c}^{j,1}$ and ${\bf c}^{j,2}$ of length $p$ as ${\bf c}^j = \left(
                                                                                                                                    \begin{array}{c}
                                                                                                                                      c^{j,1}_1 \\
                                                                                                                                      \vdots \\
                                                                                                                                      c^{j,1}_p\\   \hline
                                                                                                                                      c^{j,2}_1 \\
                                                                                                                                      \vdots \\
                                                                                                                                      c^{j,2}_p\\
                                                                                                                                    \end{array}
                                                                                                                                  \right)$ and one of the following may occur:
\begin{itemize}
\item either ${\bf c}^{j,1}={\bf 1}_p$ and ${\bf c}^{j,2}={\bf 0}_p$,
\item or ${\bf c}^{j,1}={\bf 0}_p$ and ${\bf c}^{j,2}={\bf 1}_p$,
\item or ${\bf c}^{j,1}$ and ${\bf c}^{j,2}$ have the same number of nonzero entries (each equal to $1$).
\end{itemize}
\end{enumerate}

\begin{remark}\rm
Observe that the conditions (2) and (3) can be interpreted as a sort of coupled equitability condition between the vertex subsets $V_1$ and $V_2$.
\end{remark}

\begin{example}\label{example3}\rm
Consider the graph $G$ on the left of Fig. \ref{exampleggmnonsegnato}, where $V_1=\{1,2,3\}$, $V_2=\{4,5,6\}$, and $V=\{7,8\}$. Here, we have $\ell=-1$. In the switching from $G$ to $G'$, the three edges connecting the vertex $7$ to $V_1$ are deleted and replaced by three edges connecting $7$ to $V_2$, whereas the two edges from $8$ to $V_1$ and the two edges from $8$ to $V_2$ remain unchanged. The adjacency matrix of $G$ is given by

$$
A_G= \left(
     \begin{array}{ccc|ccc|cc}
       0 & 0 & 0 & 1 & 0 & 0 & 1 & 0 \\
       0 & 0 & 1 & 1 & 0 & 1 & 1 & 1 \\
       0 & 1 & 0 & 0 & 1 & 1 & 1 & 1 \\   \hline
       1 & 1 & 0 & 0 & 0 & 1 & 0 & 0 \\
       0 & 0 & 1 & 0 & 0 & 0 & 0 & 1 \\
       0 & 1 & 1 & 1 & 0 & 0 & 0 & 1 \\   \hline
       1 & 1 & 1 & 0 & 0 & 0 & 0 & 0 \\
       0 & 1 & 1 & 0 & 1 & 1 & 0 & 0 \\
     \end{array}
   \right).
$$

The graph $G'$ is depicted on the right in Fig. \ref{exampleggmnonsegnato}. Notice that $G$ and $G'$ are cospectral, with

$$
p_G(x) = p_{G'}(x)=(x+1)^2(x^6-2x^5-11x^4+10x^3+22x^2-18x+1);
$$
however, $G$ and $G'$ are nonisomorphic, since the vertex $1$ is pendant in $G'$, but there is no pendant vertex in $G$.

\begin{figure}[h]
\begin{center}
\psfrag{1}{$1$}\psfrag{2}{$2$} \psfrag{3}{$3$} \psfrag{4}{$4$} \psfrag{5}{$5$} \psfrag{6}{$6$} \psfrag{7}{$7$} \psfrag{8}{$8$}
\psfrag{G}{$G$} \psfrag{G'}{$G'$}
\includegraphics[width=0.5\textwidth]{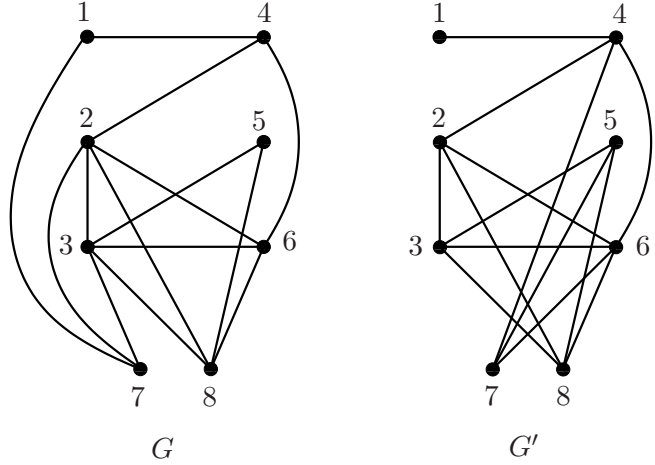} \caption{The graphs $G$ and $G'$ of Example \ref{example3}.}  \label{exampleggmnonsegnato}
\end{center}
\end{figure}
\end{example}

\section{The G-GM-switching for signed graphs}\label{sect5}
In the paper \cite{wangqiuhu} the condition that $p$ is an odd prime is used in order to prove some results concerning the possibility of a graph of admitting a G-GM-switching. However, the matrix machinery still works whenever the size of the parts $V_1$ and $V_2$ is any positive integer $m$. In this section, we extend the G-GM-switching construction to the more general setting of signed graphs, and we will obtain the unsigned G-GM-switching as a particular case. In our computations, the equal cardinality of the vertex subsets $V_1$ and $V_2$ is any positive integer $m$.

We define the \emph{signed G-GM-switching} for a given signed graph $\Gamma=(G,\sigma)$ as follows. Recall, let $\Gamma=(G,\sigma)$ be a signed graph, with underlying graph $G$, vertex set $V_G$ and edge set $E_G$, with signature function $\sigma:E_G\to \{\pm 1\}$. As usual, for a vertex $v\in V_G$, let us denote by $d^+(v)$ the number of positive edges incident to $v$ in $\Gamma$, and by $d^-(v)$ the number of negative edges incident to $v$ in $\Gamma$. Moreover, we put $d^{\pm}(v) = d^+(v) -d^-(v)$, which is called the \emph{net-degree} of $v$ in $\Gamma$.

Now assume that $V_1\sqcup V_2 \sqcup V$ is a partition of $V_G$, with $|V_1|=|V_2|=m$ and $|V|=d$. For a given vertex $v\in V_G$ let $d^+_i(v)$ denote the number of positive edges connecting $v$ to some vertex of $V_i$, and similarly for $d^-_i(v)$, so that the $i$-th net-degree of $v$ is defined as $d^{\pm}_i(v) = d^+_i(v)-d^-_i(v)$. Now suppose that:
\begin{enumerate}
\item $d^{\pm}_1(v)- d^{\pm}_2(v)= d^{\pm}_2(u) - d^{\pm}_1(u)=\ell$ for every $v\in V_1$ and $u\in V_2$;
\item for each $v\in V$:
\begin{itemize}
\item either $d^+_1(v)=m$ and $d^-_1(v) = d^+_2(v)=d^-_2(v)=0$ (and similarly by exchanging the role of $V_1$ and $V_2$);
\item or $d^-_1(v)=m$ and $d^+_1(v) = d^+_2(v)=d^-_2(v)=0$ (and similarly by exchanging the role of $V_1$ and $V_2$);
\item or $d^{\pm}_1(v)= d^{\pm}_2(v)$;
\item or $d^+_1(v)=d^-_2(v)=m$ (and similarly by exchanging the role of $V_1$ and $V_2$).
\end{itemize}
\end{enumerate}
Then the graph $\Gamma'$ constructed from $\Gamma$ by switching is obtained as follows: for every $v\in V$
\begin{itemize}
\item if $d^+_1(v)=m$ and $d^-_1(v) = d^+_2(v)=d^-_2(v)=0$, then one deletes the existing (positive) edges from $v$ to $V_1$ and join instead $v$ to all the vertices of $V_2$ by positive edges (and similarly by exchanging the role of $V_1$ and $V_2$);
\item if $d^-_1(v)=m$ and $d^+_1(v) = d^+_2(v)=d^-_2(v)=0$, then one deletes the existing (negative) edges from $v$ to $V_1$ and join instead $v$ to all the vertices of $V_2$ by negative edges (and similarly by exchanging the role of $V_1$ and $V_2$);
\item if $d^+_1(v)=d^-_2(v)=m$ (and similarly by exchanging the role of $V_1$ and $V_2$), then one changes the sign to all the (positive) edges joining $v$ to $V_1$ and to all the (negative) edges joining $v$ to $V_2$.
\end{itemize}

\begin{theorem}\label{wangsignedthm}
The graphs $\Gamma$ and $\Gamma'$ are cospectral.
\end{theorem}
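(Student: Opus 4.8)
The plan is to mimic the unsigned argument behind Theorem \ref{wangqiuhu}, replacing the odd prime $p$ by the arbitrary part size $m$ and exhibiting a symmetric involution $Q$ with $Q A_\Gamma Q = A_{\Gamma'}$. Concretely, I would set
$$
U_{2m} = I_{2m} + \frac{1}{m}\begin{pmatrix} -J_m & J_m \\ J_m & -J_m \end{pmatrix}, \qquad Q = \mathrm{diag}(U_{2m}, I_d),
$$
and order $V_G$ so that $A_\Gamma = \begin{pmatrix} M & C \\ C^T & D \end{pmatrix}$ with $M = \begin{pmatrix} A_1 & B \\ B^T & A_2 \end{pmatrix}$, exactly as in the block decomposition recalled before the statement. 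The first step is to record that $U_{2m}$ is a symmetric involution. Writing $K = m(U_{2m} - I_{2m}) = \begin{pmatrix} -J_m & J_m \\ J_m & -J_m \end{pmatrix}$ and using $J_m^2 = m J_m$, a direct block computation gives $K^2 = -2mK$, whence $U_{2m}^2 = I_{2m} + \tfrac{2}{m}K + \tfrac{1}{m^2}K^2 = I_{2m}$. Consequently $Q^2 = I$, so conjugation by $Q$ is a similarity and $A_\Gamma$, $Q A_\Gamma Q$ share their spectrum; it then remains only to identify $Q A_\Gamma Q$ with $A_{\Gamma'}$ block by block, the diagonal blocks $M$ and $D$ transforming as $U_{2m} M U_{2m}$ and $D$, and the off-diagonal block $C$ as $U_{2m} C$.

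The heart of the argument, and what I expect to be the main obstacle, is the upper-left block $U_{2m} M U_{2m}$. Here I would first translate the coupled-equitability hypothesis (1) into the matrix identities $(A_1 - B)\mathbf{1}_m = \ell \mathbf{1}_m$ and $(A_2 - B^T)\mathbf{1}_m = \ell \mathbf{1}_m$, reading $d^{\pm}_1(v)$ and $d^{\pm}_2(v)$ as the relevant row and column sums and invoking the symmetry of $A_1$ and $A_2$. Feeding these into $KM$, whose blocks all have the form $J_m(\cdot)$ and are therefore governed by column sums, collapses every block to a multiple of $J_m$ and yields the clean identity $KM = \ell K$; by symmetry of $M$ and $K$ one likewise gets $MK = \ell K$, and hence $KMK = \ell K^2 = -2m\ell K$. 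Substituting into $U_{2m} M U_{2m} = M + \tfrac{1}{m}(KM + MK) + \tfrac{1}{m^2}KMK = M + \tfrac{2\ell}{m}K - \tfrac{2\ell}{m}K$ makes the $\ell$-terms cancel, leaving $U_{2m} M U_{2m} = M$. This is precisely where hypothesis (1) is indispensable, and keeping the bookkeeping of row- versus column-sums straight, so that one obtains $KM = \ell K$ rather than a non-cancelling expression, is the delicate point.

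For the off-diagonal block I would compute the action of $U_{2m}$ on a single column $\mathbf{c} = (\mathbf{c}^1, \mathbf{c}^2)^T$ of $C$ split along $V_1 \sqcup V_2$. Writing $s_i = \mathbf{1}_m^T \mathbf{c}^i$ for the two half-sums (so that $s_i = d^{\pm}_i(v)$ for the corresponding $v \in V$), one finds $U_{2m}\mathbf{c} = \bigl(\mathbf{c}^1 + \tfrac{s_2 - s_1}{m}\mathbf{1}_m,\ \mathbf{c}^2 + \tfrac{s_1 - s_2}{m}\mathbf{1}_m\bigr)^T$. Then I would run through the alternatives of hypothesis (2): the all-positive and all-negative-to-$V_1$ cases send $(\pm\mathbf{1}_m, \mathbf{0}_m)^T$ to $(\mathbf{0}_m, \pm\mathbf{1}_m)^T$, transferring the full neighbourhood to $V_2$; the balanced case $s_1 = s_2$ fixes $\mathbf{c}$; and the case $\mathbf{c}^1 = \mathbf{1}_m$, $\mathbf{c}^2 = -\mathbf{1}_m$ maps to $(-\mathbf{1}_m, \mathbf{1}_m)^T$, reversing the sign on both sides, with the symmetric variants handled identically. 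In every case $U_{2m}\mathbf{c}$ is exactly the column prescribed by the definition of $\Gamma'$, so $U_{2m} C = \tilde{C}$ and, by symmetry of $U_{2m}$, $C^T U_{2m} = \tilde{C}^T$. Assembling the blocks gives $Q A_\Gamma Q = \begin{pmatrix} M & \tilde{C} \\ \tilde{C}^T & D \end{pmatrix} = A_{\Gamma'}$, and the similarity established in the first step finishes the proof.
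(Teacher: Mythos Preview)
Your proposal is correct and follows essentially the same route as the paper: the same conjugating matrix $Q=\mathrm{diag}(U_{2m},I_d)$, the same verification that $U_{2m}MU_{2m}=M$ via hypothesis~(1), and the same column-by-column analysis of $U_{2m}C$ via hypothesis~(2). Your packaging of the $M$-block step through the identities $KM=MK=\ell K$ and $K^2=-2mK$ is a tidy streamlining of the paper's block-by-block expansion of $N_{11},N_{12},N_{22}$, but the underlying computation is identical.
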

\begin{proof}
The proof is based on the fact that the adjacency matrices $A_\Gamma$ and $A_{\Gamma'}$ are conjugated by the rational orthogonal matrix $Q = {\rm diag}(U_{2m}, I_d)$, where $U_{2m} = I_{2m}+\frac{1}{m}\left(
                                                                                                                                                                  \begin{array}{cc}
                                                                                                                                                                    -J_m & J_m \\
                                                                                                                                                                    J_m & -J_m \\
                                                                                                                                                                  \end{array}
                                                                                                                                                                \right)$
and
$$
A_\Gamma = \left(
           \begin{array}{cc}
             M & C \\
             C^T & D \\
           \end{array}
         \right),  \qquad \textrm{with } \qquad  M=\left(
           \begin{array}{cc}
             A_1 & B \\
             B^T & A_2 \\
           \end{array}
         \right).
$$
Observe that the hypothesis on the graph $\Gamma$ implies that:
\begin{enumerate}
\item $A_1$ and $A_2$ are symmetric matrices of order $m$;
\item for each $i,i'=1,\ldots ,m$, one has
\begin{eqnarray}\label{conditionAB}
\sum_{j=1}^m ((A_1)_{ij}-B_{ij}) =  \sum_{j=1}^m ((A_2)_{i'j}-B_{ji'})= \ell;
\end{eqnarray}

\item each column ${\bf c}^j$ of $C$ is a vector of length $2m$ which can be partitioned into two semicolumns ${\bf c}^{j,1}$ and ${\bf c}^{j,2}$ of length $m$ as ${\bf c}^j = \left(
                                                                                                                                    \begin{array}{c}
                                                                                                                                      c^{j,1}_1 \\
                                                                                                                                      \vdots \\
                                                                                                                                      c^{j,1}_m\\   \hline
                                                                                                                                      c^{j,2}_1 \\
                                                                                                                                      \vdots \\
                                                                                                                                      c^{j,2}_m\\
                                                                                                                                    \end{array}
                                                                                                                                  \right)$ and one of the following may occur:
\begin{itemize}
\item either ${\bf c}^{j,1}={\bf 1}_m$ and ${\bf c}^{j,2}={\bf 0}_m$ or viceversa;
\item or ${\bf c}^{j,1}=-{\bf 1}_m$ and ${\bf c}^{j,2}={\bf 0}_m$ or viceversa;
\item or $\sum_{h=1}^m c^{j,1}_h =  \sum_{h=1}^m c^{j,2}_h$;
\item or ${\bf c}^{j,1}={\bf 1}_m$ and ${\bf c}^{j,2}=-{\bf 1}_m$ or viceversa.
 \end{itemize}
\end{enumerate}
We have
$$
QA_\Gamma Q = \left(
        \begin{array}{cc}
          U_{2m}& 0 \\
          0 & I_d \\
        \end{array}
      \right)      \left(
        \begin{array}{cc}
          M & C \\
          C^T & D \\
        \end{array}
      \right)
 \left(
        \begin{array}{cc}
          U_{2m} & 0 \\
          0 & I_d \\
        \end{array}
      \right)    = \left(
        \begin{array}{cc}
          U_{2m}MU_{2m} & U_{2m}C \\
          C^TU_{2m} & D \\
        \end{array}
      \right).
$$
We start by proving that $U_{2m}MU_{2m} =M$. A direct computation gives $U_{2m}MU_{2m} = M + N$, with
\begin{eqnarray*}
N&=& \frac{1}{m}\left(
        \begin{array}{cc}
          A_1 & B \\
          B^T & A_2 \\
        \end{array}
      \right)
 \left(
        \begin{array}{cc}
          -J_m & J_m \\
          J_m & -J_m \\
        \end{array}
      \right) + \frac{1}{m}
 \left(
        \begin{array}{cc}
          -J_m & J_m \\
          J_m & -J_m \\
        \end{array}
      \right)
 \left(
        \begin{array}{cc}
          A_1 & B \\
          B^T & A_2 \\
        \end{array}
      \right)\\      &+& \frac{1}{m^2}
 \left(
        \begin{array}{cc}
          -J_m & J_m \\
          J_m & -J_m \\
        \end{array}
      \right)
 \left(
        \begin{array}{cc}
          A_1 & B \\
          B^T & A_2 \\
        \end{array}
      \right)
 \left(
        \begin{array}{cc}
          -J_m & J_m \\
          J_m & -J_m \\
        \end{array}
      \right).
\end{eqnarray*}
Therefore, our goal is to prove that $N= \left(
        \begin{array}{cc}
          N_{11} & N_{12} \\
          N_{12}^T & N_{22} \\
        \end{array}
      \right)
$ is the zero matrix. By using the property in Eq. \eqref{conditionAB} and the definition of $J_m$, we get:
\begin{eqnarray*}
N_{11} &=& \frac{1}{m}(-A_1J_m+BJ_m-J_mA_1+J_mB^T) + \frac{1}{m^2}(J_mA_1J_m-J_mB^TJ_m-J_mBJ_m+J_mA_2J_m)\\
&=& \frac{1}{m}(-(A_1-B)J_m-J_m(A_1-B^T)) + \frac{1}{m^2}(J_m(A_1-B)J_m + J_m(A_2-B^T)J_m)\\
&=& \frac{1}{m}(-\ell J_m -\ell J_m) + \frac{1}{m^2}(J_m\ell J_m + J_m\ell J_m)\\
&=& -\frac{2\ell}{m}J_m +\frac{2\ell}{m^2}mJ_m =O_m.
\end{eqnarray*}
A similar argument shows that $N_{22}=O_m$. Now:
\begin{eqnarray*}
N_{12} &=& \frac{1}{m}(A_1J_m-BJ_m-J_mB+J_mA_2) + \frac{1}{m^2}(-J_mA_1J_m+J_mB^TJ_m+J_mBJ_m-J_mA_2J_m)\\
&=& \frac{1}{m}((A_1-B)J_m+J_m(A_2-B)) + \frac{1}{m^2}(J_m(-A_1+B)J_m + J_m(-A_2+B^T)J_m)\\
&=& \frac{1}{m}(\ell J_m +\ell J_m) + \frac{1}{m^2}(-J_m\ell J_m - J_m\ell J_m)\\
&=& \frac{2\ell}{m}J_m -\frac{2\ell}{m^2}mJ_m =O_m.
\end{eqnarray*}
In order to conclude the proof, we have to show that the left multiplication of $C$ by $U_{2m}$ changes the column  ${\bf c}^j = \left(
                                                                                                                                    \begin{array}{c}
                                                                                                                                      {\bf c}^{j,1}\\ \hline
                                                                                                                                      {\bf c}^{j,2}                                                                                                   \end{array}
                                                                                                                                  \right)$  according to the switching rules of the construction of $\Gamma'$.
Observe that
$$
U_{2m}\left(
                                                                                                                                    \begin{array}{c}
                                                                                                                                      {\bf c}^{j,1}\\
                                                                                                                                      {\bf c}^{j,2}                                                                                                   \end{array}
                                                                                                                                  \right) = \left(
                           \begin{array}{cc}
                             I_m-\frac{1}{m}J_m & \frac{1}{m}J_m \\
                             \frac{1}{m}J_m  & I_m-\frac{1}{m}J_m \\
                           \end{array}
                         \right)\left(
                                                                                                                                    \begin{array}{c}
                                                                                                                                      {\bf c}^{j,1}\\
{\bf c}^{j,2}                                                                                                   \end{array}
                                                                                                                                  \right) =
\left(
  \begin{array}{c}
   (I_m-\frac{1}{m}J_m){\bf c}^{j,1} + \frac{1}{m}J_m{\bf c}^{j,2}  \\
    \frac{1}{m}J_m{\bf c}^{j,1} + (I_m-\frac{1}{m}J_m){\bf c}^{j,2} \\
  \end{array}
\right)
$$
so that, for each $i=1,\ldots, m$:
$$
(U_{2m}C)^{j,1}_i = c^{j,1}_i + \frac{1}{m}\sum_{h=1}^m(c^{j,2}_h-c^{j,1}_h)
$$
$$
(U_{2m}C)^{j,2}_i = c^{j,2}_i + \frac{1}{m}\sum_{h=1}^m(c^{j,1}_h-c^{j,2}_h).
$$
Then it can be easily checked that:
\begin{itemize}
\item if ${\bf c}^{j,1}={\bf 1}_m$ and ${\bf c}^{j,2}={\bf 0}_m$, then $(U_{2m}C)^{j,1} = {\bf 0}_m$ and  $(U_{2m}C)^{j,2}={\bf 1}_m$ (and viceversa);
\item if ${\bf c}^{j,1}=-{\bf 1}_m$ and ${\bf c}^{j,2}={\bf 0}_m$, then $(U_{2m}C)^{j,1} = {\bf 0}_m$ and  $(U_{2m}C)^{j,2}=-{\bf 1}_m$ (and viceversa);
\item if $\sum_{h=1}^m c^{j,1}_h =  \sum_{h=1}^m c^{j,2}_h$, then $(U_{2m}C)^{j,1} = {\bf c}^{j,1}$ and  $(U_{2m}C)^{j,2}={\bf c}^{j,2}$;
\item if ${\bf c}^{j,1}={\bf 1}_m$ and ${\bf c}^{j,2}=-{\bf 1}_m$, then $(U_{2m}C)^{j,1} = -{\bf 1}_m$ and $(U_{2m}C)^{j,2}={\bf 1}_m$ (and viceversa).
\end{itemize}
\end{proof}

\begin{example}\label{exampleggmsegnatoes}\rm
Consider the graph $\Gamma$ represented on the left in Fig. \ref{exampleggmsegnato}, where $V_1=\{1,2,3,4,5\}$, $V_2=\{6,7,8,9,10\}$, and $V=\{11,12,13,14\}$.
\begin{figure}[h]
\begin{center}
\psfrag{1}{$1$}\psfrag{2}{$2$} \psfrag{3}{$3$} \psfrag{4}{$4$} \psfrag{5}{$5$} \psfrag{6}{$6$} \psfrag{7}{$7$} \psfrag{8}{$8$}
\psfrag{9}{$9$}\psfrag{10}{$10$} \psfrag{11}{$11$} \psfrag{12}{$12$} \psfrag{13}{$13$} \psfrag{14}{$14$}
\psfrag{Gamma}{$\Gamma$}\psfrag{Gamma'}{$\Gamma'$}
\includegraphics[width=1\textwidth]{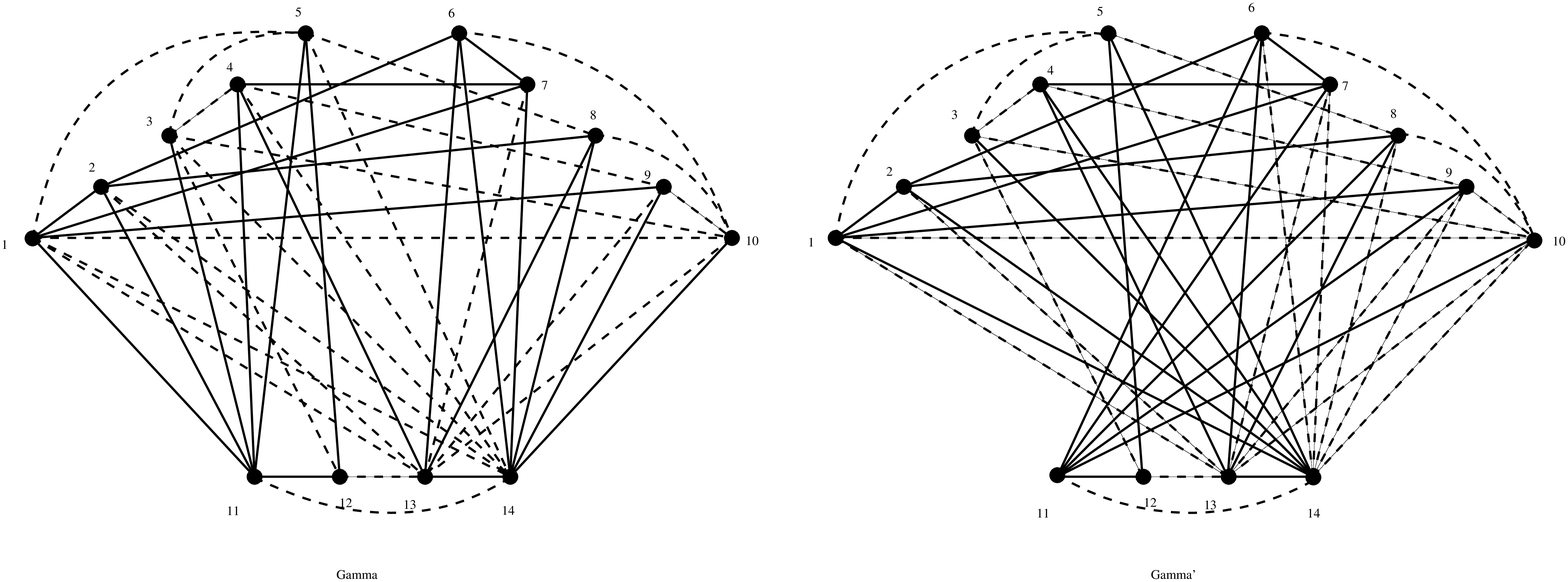} \caption{The graphs $\Gamma$ and $\Gamma'$ of Example \ref{exampleggmsegnatoes}.}  \label{exampleggmsegnato}
\end{center}
\end{figure}
The adjacency matrix of $\Gamma$ is
$$
A_\Gamma = \left(
      \begin{array}{ccccc|ccccc|cccc}
        0 & 1 & 0 & 0 & -1 & 0 & 1 & 0 & 1 & -1 & 1 & 0 & -1 & -1 \\
        1 & 0 & 0 & 0 & 0 & 1 & 0 & 1 & 0 & 0 & 1 & 0 & -1 & -1 \\
        0 & 0 & 0 & -1 & -1 & 0 & 0 & 0 & 0 & -1 & 1 & -1 & 0 & -1 \\
        0 & 0 & -1 & 0 & 0 & 0 & 1 & 0 & -1 & 0 & 1 & 0 & 1 & -1 \\
        -1 & 0 & -1 & 0 & 0 & 0 & 0 & -1 & 0 & 0 & 1 & 1 & 0 & -1 \\  \hline
        0 & 1 & 0 & 0 & 0 & 0 & 1 & 0 & 0 & -1 & 0 & 0 & 1 & 1 \\
        1 & 0 & 0 & 1 & 0 & 1 & 0 & 0 & 0 & 0 & 0 & 0 & -1 & 1 \\
        0 & 1 & 0 & 0 & -1 & 0 & 0 & 0 & 0 & -1 & 0 & 0 & 1 & 1 \\
        1 & 0 & 0 & -1 & 0 & 0 & 0 & 0 & 0 & -1 & 0 & 0 & -1 & 1 \\
        -1 & 0 & -1 & 0 & 0 & -1 & 0 & -1 & -1 & 0 & 0 & 0 & -1 & 1 \\   \hline
        1 & 1 & 1 & 1 & 1 & 0 & 0 & 0 & 0 & 0 & 0 & 1 & 0 & -1 \\
        0 & 0 & -1 & 0 & 1 & 0 & 0 & 0 & 0 & 0 & 1 & 0 & -1 & 0 \\
        -1 & -1 & 0 & 1 & 0 & 1 & -1 & 1 & -1 & -1 & 0 & -1 & 0 & 1 \\
        -1 & -1 & -1 & -1 & -1 & 1 & 1 & 1 & 1 & 1 & -1 & 0 & 1 & 0 \\
      \end{array}
    \right)
$$
which satisfies the conditions of Theorem \ref{wangsignedthm}, with $\ell=-1$. The graph $\Gamma'$ is depicted on the right in Fig. \ref{exampleggmsegnato}, and it is obtained from $\Gamma$ by deleting all the positive edges from the vertex $11$ to $V_1$, and introducing positive edges connecting that vertex to each vertex of $V_2$; by changing the sign of the five positive edges connecting the vertex $14$ to $V_1$, and of the five negative edges connecting the vertex $14$ to $V_2$. The graphs $\Gamma=(G,\sigma)$ and $\Gamma'=(G', \sigma')$ are cospectral, with
\begin{eqnarray*}
p_{\Gamma}(x)=p_{\Gamma'}(x) &=& x^{14}-46x^{12}+770x^{10}-22x^9-5754x^8+318x^7+18879x^6-608x^5\\
&-&23953x^4+422x^3+9520x^2-482x-489.
\end{eqnarray*}
However, the underlying graphs $G$ and $G'$ are not cospectral, since
\begin{eqnarray*}
p_G(x) &=& x^{14}-46x^{12}-100x^{11}+354x^{10}+1078x^9-610x^8-3230x^7+59x^6\\
&+&3636x^5-53x^4-1430x^3+324x^2-6x-1
\end{eqnarray*}
\begin{eqnarray*}
p_{G'}(x) &=& x^{14}-46x^{12}-96x^{11}+334x^{10}+826x^9-978x^8-2378x^7+1539x^6\\
&+&2592x^5-1293x^4-798x^3+368x^2-26x-1.
\end{eqnarray*}
Therefore, $\Gamma$ and $\Gamma'$ are not switching isomorphic as signed graphs.
\end{example}

\begin{remark}\rm
The condition $d^{\pm}_1(v)=d^{\pm}_2(v)$ for a vertex $v\in V$ contains the following special subcases:
\begin{itemize}
\item $v$ has no neighbors either in $V_1$ or in $V_2$;
\item $d^+_1(v)=d^-_1(v)\neq 0$ and $d^+_2(v)=d^-_2(v) =0$ (and similarly by exchanging the role of $V_1$ and $V_2$);
\item $d^+_1(v) = d^+_2(v)\neq 0$ and $d^-_1(v)=d^-_2(v)=0$;
\item $d^-_1(v) = d^-_2(v)\neq 0$ and $d^+_1(v)=d^+_2(v)=0$.
\end{itemize}
Moreover, our Theorem \ref{wangsignedthm} contains and extends the theorem by Wang, Qiu and Hu (Theorem \ref{wangqiuhu}), as in the unsigned case the signature is constantly equal to $1$, and the condition on the constant difference $d^{\pm}_1 (v) - d^{\pm}_2 (v) = d^{\pm}_2 (u) - d^{\pm}_1(u) =\ell$ reduces to the condition $d^{+}_1 (v) - d^{+}_2 (v) = d^{+}_2 (u) - d^{+}_1(u) =\ell$, for each $v\in V_1$ and $u\in V_2$. On the other hand, the condition $d^{\pm}_1(v)=d^{\pm}_2(v)$ for each $v\in V$ is equivalent to the condition $d^{+}_1(v)=d^{+}_2(v)$, which appears in the unsigned theorem.
\end{remark}

\section*{Acknowledgments}

The first and second authors recognize the support from the INDAM-GNSAGA.


\end{document}